\keywords{Liv\v{s}ic Theorem}
\author{Genady Ya. Grabarnik}
\address{Dept. of Math \& Computer Science, St. John's University, Queens, NY, USA}
\email{grabarng@stjohns.edu}
\author{Misha Guysinsky} 
\address{Deptartment of Mathematics, The Pennsilvania State University, University Park,  PA, USA}
\email{guysin\_m@math.psu.edu}
\date{}
\title{ Liv\v{s}ic Theorem for Banach Rings }
\newtheorem{theorem}{Theorem}
\newtheorem{corollary}[theorem]{Corollary}
\newtheorem*{main}{Main Theorem}
\newtheorem*{lemman}{Lemma}
\newtheorem{lemma}[theorem]{Lemma}
\newtheorem{prop}[theorem]{Proposition}
\theoremstyle{definition}
\newtheorem{definition}[theorem]{Definition}
\newcommand{\la}{\ensuremath{\lambda}}
\newcommand{\ee}{\ensuremath{\varepsilon}}
\newcommand{\ep}{\ensuremath{\epsilon}}
\newcommand{\si}{\ensuremath{\mathbf{\sigma}}}
\newcommand{\mr}{\ensuremath{\mathbb{R}}}
\newcommand{\mz}{\ensuremath{\mathbb{Z}}}
\newcommand{\mf}{\ensuremath{\mathbb{F}}}
\newcommand{\dist}{\text{dist}}
\begin{document}

\begin{abstract} We prove the Liv\v{s}ic Theorem for  H\"{o}lder continuous cocycles with values in Banach rings. We consider   a transitive homeomorphism ${\si:X\to X}$  that satisfies the Anosov Closing Lemma, and  a H\"{o}lder continuous map ${a:X\to B^\times}$ from a compact metric space $X$  to the set of invertible elements of some Banach ring $B$. We show that it is a coboundary with  a H\"{o}lder continuous transition function if and only if ${a(\si^{n-1}p)\ldots a(\si p)a(p)=e}$ for each periodic point $p=\si^n p$.
\end{abstract}

\maketitle

\section{Introduction}
 We assume that $X$ is a compact metric space, $G$  a complete metric group,  and $\si:X\to X$  a homeomorphism. 

We say that  a map $a:\mathbb{Z}\times X\to G$ is {\it a cocycle} over  \si\  if 
$$a(n,x)=a(n-k,\si^kx)a(k,x)\quad\text{for any }n,k\in\mz$$

 Every map $a:X\to G$ generates a cocycle $a(n,x)$ defined as $$a(n,x)=a(\si^{n-1}x)a(\si^{n-2}x)\ldots a(x) \quad n>0$$
$$a(0,x)=Id$$
$$a(n,x)= a^{-1}(\si^{n}x)\ldots a^{-1}(\si^{-2}x)a^{-1}(\si^{-1}x)\quad n<0$$
We see that $a(1,x)=a(x)$. In this paper we consider only cocycles generated by H\"older continuous maps $a:X\to G$.

We say that a H\"older continuous map $a:X\to G$   is a {\it coboundary } (or more precisely generates a cocycle which is a coboundary) if there is a H\"older continuous function $t:X\to G$ such that
$$ a(x)=t(\si x)t^{-1}(x)$$
The function $t(x)$ is a called a {transition map}.
If $a(x)$ is a coboundary then it is clear that
$$a(n,x)=t(\si^n x)t^{-1}(x)$$
A question whether some cocycle is a coboundary or not appears naturally in many important problems in dynamical systems. 
There is  a simple necessary condition for a cocycle to be a coboundary. If $a(x)$ is a coboundary and $p\in X$ is  a periodic point  $\si^n p=p$  then
$$a(\si^{n-1}p)\ldots a(\si p)a(p)=a(n,p)=t(\si^n p)t^{-1}(p)=e$$
where $e$ is the identity element in the group $G$.

We say that for a cocycle $a(n,x)$ {\it periodic obstruction vanish} if
 \begin{equation}\label{e0} a(\si^{n-1}p)\ldots a(\si p)a(p)=e\quad\forall p\in X\text{ with } \si^np=p, n\in\mathbb{N} \end{equation}

 A.Liv\v{s}ic (see ~\cite{L1,L2})  proved that  when \si\ is a transitive Anosov map and the group $G$ is $\mathbb{R}$ or $\mathbb{R}^n$  then a cocycle  $a(x)$ is a coboundary if and only if the periodic obstruction vanish. This result is called Liv\v{s}ic theorem. The proof of  the Liv\v{s}ic theorem for other groups turned out to be harder.  Nevertheless, in the last twenty years in the series of papers (see \cite{BN},\cite{PW},\cite{P},\cite{KS},\cite{NT},\cite{LW}) it was shown that for some  groups under an additional assumption on the growth rates of the cocycle $a(n,x)$  the condition    (\ref{e0}) is also sufficient.    For example, in \cite{BN} it was shown that if $G=B^\times$ the set of invertible elements of some Banach algebra then if periodic obstruction vanish and $a(x)$ is close to the identity element $e$ then it is a coboundary. 
 The question remained if this additional  assumption will follow from the fact that the products along periodic points are equal to $e$. In 2011 B.Kalinin in \cite{Ka} made a breakthrough by proving the  Liv\v{s}ic theorem for functions with values in $GL(n,\mathbb{R})$ and  more generally in a connected Lie group assuming only that condition (\ref{e0}) is satisfied.
 
  He used  Lyapunov exponents for different invariant measures  to estimate the rate of the cocycle growth   and then  approximated Lyapunov exponents for all invariant measures by Lyapunov exponents only at periodic points.  To do the latter the Oseledets Theorem was used.  In this paper, we are  proving that  a cocycle with values in invertible elements of Banach ring is a coboundary if and only if periodic obstructions vanish. There is no analogs of the Oseledets Theorem for Banach rings ( or even Banach algebras). Still we can define analogs of the highest and lowest Lyapunov exponents and using a different  argument  show that they could be approximated by the values of the cocycle at periodic points. Examples of Banach rings include, Banach algebras, and Banach algebras with $\mf$ as a field of scalars, where $\mf$ is a local field. For them it is a new result. Also several  already known results follow: Liv\v{s}ic Theorem for cocycles with values  in $GL(n,\mr)$ (see \cite{Ka}) and $GL(n,\mf)$ (see \cite{LZ}). 
  
  As in \cite{Ka} we require that the map \si\ was transitive and had the following property  
 \begin{definition} We say that a homeomorphism $\si:X\to X$ has a {\it closing property} if there exist positive numbers $\delta_0, \la,C$ such that for any $x\in X$ and $n>0$ with $\dist(x,\si^n x)\le \delta_0$ we can find points $p,z\in X$ where
 $$\si^n p=p$$
 and  for every $i=0,1,\ldots, n$
 $$\dist(\si^i p,\si^i z)\le e^{-i\la}C\dist(x,\si^n x)\quad \dist(\si^i x,\si^i z)\le e^{-(n-i)\la}C\dist(x,\si^n x) $$
We will call $\lambda$ the expansion constant for the map \si.
\end{definition}
 Anosov maps and shifts of finite types are main examples of maps with closing property.
 \begin{definition} An associative (non--commutative) ring $B$  with the unity element $e$ is called  {\it Banach ring} if there is a function $\|\cdot\|:B\to\mr$ such that
 \begin{enumerate}
\item $\|a\|\ge 0$ and $\|a\|=0$ if and only if $a=0$.
\item $\|a+b\|\le \|a\|+\|b\|$.
\item $\|a\cdot b\|\le \|a\|\cdot \|b\|$.
\item The ring $B$ is a complete metric space with respect to the distance defined as $dist(a,b)=\|a-b\|$.
\end{enumerate}
 \end{definition}
 We denote as $B^\times$ the set of invertible elements of a Banach ring $B$. The main result of this paper is:
 \begin{main}\label{t2} Let $X$ be a compact metric space, $\si:X\to X$  a transitive homeomorphism with closing property. If $a:X\to B^\times $ is an  $\alpha$-H\"older continuous function such that 
 $$    a(\si^{n-1}p)\ldots a(\si p)a(p)=e\quad  \forall p\in X, n\in \mathbb{N} \text{ with } \si^np=p$$
  then there exists an $\alpha$-H\"older function $t:X\to B^\times$ such that
  $$ a(x)=t(\si x)t^{-1}(x)$$
 \end{main}

\section{Subadditive Cocycles}

Let $\si:X\to X$ be a continuous function. We will call a continuous function $s(n,x):\mathbb{Z}\times X\to \mathbb{R}$ \textit{ a subadditive cocycle} over the function $\si$  if 
$$s(n+m,x)\le s(n,\si^m x)+s(m,\si x)$$

From the Kingman's Theorem about subadditive cocycles  \cite{Ki, Furstenberg} follows that for every $\si$-invariant   measure $\mu$ and for almost all $x$ there exists a number 
\begin{equation}\label{e1} r(x)=\lim_{n\to\infty} \frac{s(n,x)}{n}\end{equation}
If $\mu$ is ergodic then this number is the same for a.a $x$ and equals $\displaystyle{\inf_{n\ge 1}\int_X \frac{s(n,x)}{n}d\mu}$. For an ergodic $\mu$ we will call this number $r_\mu$.  The set of all \si-invariant  ergodic measures we denote as $\mathcal{M}$. The set of points $x\in X$  for which limit $(\ref{e1})$ exists we call regular and denote as $\mathcal{R}$

 Of course, there could be points for which the limit in $(\ref{e1})$  does not exist.
 
 We  can also consider numbers $s_n=\displaystyle{\max_x s(n,x)}$.   It is a subadditive sequence of numbers $s_{n+m}\le s_n+s_m$ and we denote as $r$ the following number:
 \begin{equation}\label{e2}\displaystyle{r=\lim_{n\to\infty}\frac{s_n}{n}=\inf_{n\ge 1} \frac{s_n}{n}}
 \end{equation}
 It is known (see \cite{S}) that if $\si$ is continuous and $X$ is compact then
\begin{equation}\label{e3} r=\sup_{x\in\mathcal{R}} r(x)=\sup_{\mu\in\mathcal{M}} r_\mu\end{equation} 
For a periodic point $p=\si^k p$ we denote  as $r_{p}$ the following quantity 
$$r_{p}=\frac{ s(k,p)}{k}$$
It is easy to see that $r(p)$ exists (but can be $-\infty$) and $r(p)\le r_p$.

 We will show that if $\si$ has a closing property we can prove that:
\begin{theorem}\label{t3} Let $X$ be a compact metric space, $\si:X\to X$  a homeomorphism with closing property.  We denote as $\mathcal{P}$ the set of all periodic points. If $a:X\to B^\times $ is an  $\alpha$-H\"older continuous function, $a(n,x)$ is a cocycle generated by it and ${s(n,x)=\ln \|a(n,x)\| }$ then 
 \begin{equation}\label{e3} r=\sup_{x\in\mathcal{R}} r(x)=\sup_{\mu\in\mathcal{M}} r_\mu\le \sup_{p\in\mathcal{P}}r_p \end{equation}
 
 \end{theorem}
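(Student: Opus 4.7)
The plan is to combine Poincar\'e recurrence with the closing property so as to transfer the cocycle growth from a generic orbit of an ergodic measure almost attaining $r$ to a shadowing periodic orbit. Fix $\ee>0$. By the already-stated equality $r=\sup_{\mu\in\mathcal{M}}r_\mu$, I pick an ergodic $\mu$ with $r_\mu\ge r-\ee$ and a $\mu$-generic $x$; Kingman's theorem gives $s(n,x)/n\to r_\mu$, while Poincar\'e recurrence provides times $n_k\to\infty$ with $\dist(\si^{n_k}x,x)\to 0$. After passing to a subsequence I may assume both properties hold simultaneously and $\dist(\si^{n_k}x,x)<\delta_0$, so the closing property yields periodic $p_k=\si^{n_k}p_k$ and shadowing points $z_k$ with the stated exponential distance estimates.

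H\"older continuity of $a$ together with the closing bounds gives
\[
\sum_{i=0}^{n_k-1}\|a(\si^ip_k)-a(\si^ix)\|\;\le\;L\sum_{i=0}^{n_k-1}\dist(\si^ip_k,\si^ix)^\alpha\;\le\;K_0\,\dist(\si^{n_k}x,x)^\alpha,
\]
where $K_0$ depends only on $\alpha,\lambda,C$ and the H\"older constant $L$ (the geometric decay from both endpoints absorbs the sum). To compare $a(n_k,p_k)$ with $a(n_k,x)$ I would use the telescoping identity
\[
a(n,p)-a(n,x)\;=\;\sum_{i=0}^{n-1}a(n-1-i,\si^{i+1}p)\,\bigl(a(\si^ip)-a(\si^ix)\bigr)\,a(i,x).
\]
If one can derive from this that $\bigl|\ln\|a(n_k,p_k)\|-\ln\|a(n_k,x)\|\bigr|=o(n_k)$, then $r_{p_k}=s(n_k,p_k)/n_k\ge s(n_k,x)/n_k-o(1)\to r_\mu$, so $\sup_{p\in\mathcal{P}}r_p\ge r_\mu\ge r-\ee$, and since $\ee$ is arbitrary the desired inequality $r\le\sup_p r_p$ follows.

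The main obstacle is exactly this last comparison. The naive term-by-term bound on the telescoping introduces a factor $M^{n-1}$, with $M=\max_x\|a(x)\|$, while one needs to compare to $\|a(n_k,x)\|\approx e^{r_\mu n_k}$, which can be exponentially smaller. The cleanest way to close the gap is to refine $n_k$ so that $\dist(\si^{n_k}x,x)\le e^{-\beta n_k}$ for some $\beta>(\ln M-r_\mu)/\alpha$; this level of quantitative recurrence is available, for instance, from Ornstein--Weiss or Brin--Katok type statements when $\mu$ has positive entropy. In the general Banach-ring setting I would expect the authors to split the telescoping at the middle of the shadowing window and exploit the exponential decay of $\dist(\si^ip_k,\si^iz_k)$ near $i=n_k$ and of $\dist(\si^iz_k,\si^ix)$ near $i=0$, so that the worst-case $M^k$ bound on the surviving cocycle factors is replaced by a bound tracking the actual growth of $\|a(k,\cdot)\|$ along the shadowed orbit.
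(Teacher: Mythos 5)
Your proposal correctly isolates the central difficulty --- the factor $M^{n-1}$ in the naive term-by-term bound on the telescoping sum --- but the fix you sketch does not close the gap, and the paper's actual argument uses two ingredients you do not have. First, the quantitative recurrence you invoke (Ornstein--Weiss, Brin--Katok) requires positive entropy of $\mu$, which is not available here: the measure is chosen only to nearly attain the supremum $r$, and may well have zero entropy. Second, and more fundamentally, no amount of faster recurrence by itself fixes the $M^{n-1}$ problem, because the offending factors are $\|a(n-1-i,\si^{i+1}p)\|$ and $\|a(i,x)\|$ in the telescoping sum, and these are properties of the cocycle, not of the recurrence rate.

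What the paper does instead is control the factor $\|a(n-k,\si^kx)\|$ not by the trivial bound $M^{n-k}$ but via the Karlsson--Margulis lemma (\cite[Prop.~4.2]{MK}), which gives, for $\mu$-a.e.\ $x$ and infinitely many $n$, the estimate $\|a(n-k,\si^kx)\|\le\|a(n,x)\|e^{-(r_\mu-\ep)k}$ for all $k$ in a range $K(x)\le k\le n$. Combined with the crude subadditive bound $\|a(k,y)\|\le e^{k(r+\ee)}$ and the H\"older estimate $\|a(\si^kx)-a(\si^kp)\|\le H\delta^\alpha e^{-k\gamma\alpha}$, the $k$-th term of the telescoping sum is at most $\|a(n,x)\|H\delta^\alpha e^{-k(\gamma\alpha-(r-r_\mu)-2\ee)}$, which is summable precisely when $r-r_\mu<\alpha\gamma$. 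This yields Proposition~\ref{l5}: $\|a(n,p)\|\ge\frac12\|a(n,x)\|$ for suitable $p$ and infinitely many $n$. The second idea you are missing is in Lemma~\ref{l6}: one does not take a periodic point of period exactly $n$. Instead one finds a return time $k$ with $n(1+\ee)\le k\le n(1+2\ee)$ (which exists by Lemma~\ref{l7}, a soft Birkhoff argument on the support of $\mu$, no entropy needed) and uses the Anosov closing over the full window $[0,k]$ but only reads off the first $n$ steps; the convexity of $-\lambda\min(i,k-i)$ then forces genuine exponential decay $e^{-\gamma i}$ with $\gamma=\ee\lambda$ on $[0,n]$, whereas closing with period exactly $n$ would give no decay at the midpoint. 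Finally, $r_p=s(k,p)/k$ is compared to $s(n,x)/n$ by paying only a sub-exponential penalty $e^{m(k-n)}\le e^{2m\ee n}$ for the extra $k-n$ steps. Without the Karlsson--Margulis estimate and the $(1+\ee)$-inflated return time your telescoping comparison does not go through.
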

 
 An easy corollary of this theorem is the following important for us result.
\begin{corollary} \label{c3} Let $X$ be a compact metric space, $\si:X\to X$  a  homeomorphism with closing property. If $a(n,p)=e$ for every periodic point $p$ with period $n$ then for any $\ee>0$ there exists $C$ such that for all integer positive $n$ and all $x\in X$
$$\|a(n,x)\|\le Ce^{\ee n}$$
$$\|a(-n,x)\|\le Ce^{\ee n}$$
$$\|[a(n,x)]^{-1}\|\le Ce^{\ee n}$$
\end{corollary}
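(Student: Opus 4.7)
The plan is to deduce all three bounds from Theorem \ref{t3} applied to two different subadditive cocycles, using the trivial observation that under the periodic-obstruction hypothesis $a(n,p)=e$, so the cocycle norms along periodic orbits are constantly $1$.

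For the first bound I would set $s(n,x) = \ln\|a(n,x)\|$; this is subadditive over $\si$ by submultiplicativity of $\|\cdot\|$. The hypothesis gives $s(n,p) = \ln\|e\| = 0$, hence $r_p = 0$ for every $p\in\mathcal{P}$. Theorem \ref{t3} then yields
$$r = \lim_{n\to\infty}\frac{s_n}{n}\le \sup_{p\in\mathcal{P}} r_p \le 0.$$
For any $\varepsilon>0$ there is therefore $N$ with $s_n \le \varepsilon n$ for $n\ge N$, so $\|a(n,x)\|\le e^{\varepsilon n}$ uniformly in $x$; for $0<n<N$ a uniform upper bound follows from continuity of $a$ and compactness of $X$. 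Combining, $\|a(n,x)\|\le Ce^{\varepsilon n}$ for a suitable $C$.

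For the remaining two bounds I would apply exactly the same argument to the $\alpha$-H\"older function $\tilde a(x) = a(\si^{-1}x)^{-1}$ regarded as generating a cocycle over the reverse dynamics $\tau = \si^{-1}$. A direct unpacking of the definitions shows $\tilde a(n,x) = a(-n,x)$, and for any $\si^n$-periodic point $p$ one has $\tilde a(n,p) = [a(n,p)]^{-1} = e$, so the periodic-obstruction hypothesis transfers to $\tilde a$. Applying Theorem \ref{t3} to $\tilde a$ yields $\|a(-n,x)\|\le Ce^{\varepsilon n}$, and the elementary cocycle identity $[a(n,x)]^{-1} = a(-n,\si^n x)$ immediately upgrades this to the bound on $\|[a(n,x)]^{-1}\|$. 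The only point requiring explicit verification is that $\si^{-1}$ also has the closing property, so that Theorem \ref{t3} is available for $\tau$; this is automatic for the standard examples (Anosov maps and subshifts of finite type) but should be noted. Beyond this, the corollary is a routine translation of the qualitative estimate $r\le 0$ furnished by Theorem \ref{t3} into the uniform exponential bounds demanded by the statement.
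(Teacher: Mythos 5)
Your strategy is the same as the paper's: apply Theorem \ref{t3} directly to $s(n,x)=\ln\|a(n,x)\|$ for the first bound, and reduce the other two to a cocycle over $\si^{-1}$ with vanishing periodic data. Two small cautions, one of which the paper handles more cleanly than you do.

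First, you take the reversed generator to be $\tilde a(x)=a(\si^{-1}x)^{-1}$, which does satisfy $\tilde a(n,x)=a(-n,x)$ exactly. But to feed this into Theorem \ref{t3} you must know that $\tilde a$ is $\alpha$-H\"older, and $\tilde a = (a\circ\si^{-1})^{-1}$ is $\alpha$-H\"older only if $\si^{-1}$ is Lipschitz (or at least H\"older), which is not among the stated hypotheses on $\si$. The paper instead uses $b(x)=a^{-1}(x)$ as the generator over $\si^{-1}$; then $b(n,\si^{-1}x)=a(-n,x)$, which differs from your identity only by a shift of base point and costs nothing for sup-norm estimates, and the needed H\"older continuity of $a^{-1}$ is proved outright from boundedness of $a^{-1}$ together with the identity $a^{-1}-b^{-1}=b^{-1}(b-a)a^{-1}$. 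Your proposal asserts the H\"older continuity of $\tilde a$ without argument; you should either switch to $b=a^{-1}$ and supply the paper's short argument, or add the extra hypothesis on $\si^{-1}$.

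Second, your remark that one must check that $\si^{-1}$ inherits the closing property is well taken and is in fact a point the paper passes over in silence. The literal definition does not transfer: applying the closing property of $\si$ at $u=\si^{-n}x$ and taking $z'=\si^n z$ produces the two inequalities with the roles of the exponents $e^{-i\la}$ and $e^{-(n-i)\la}$ interchanged. What does transfer, and is all that Lemma \ref{l6} actually uses, is the symmetric consequence $\dist(\si^{-i}x,\si^{-i}p)\le 2C\,e^{-\la\min(i,n-i)}\dist(x,\si^{-n}x)$. So rather than appealing to ``standard examples,'' it is cleaner to observe that Theorem \ref{t3} only needs this symmetric estimate, which one gets for $\si^{-1}$ directly from the closing property of $\si$. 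With these two repairs your argument coincides with the paper's.
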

\begin{proof} The first inequality follows from the fact that if $s(n,x)=\ln\|a(n,x)\|$ then for this subadditive cocycle $r_p=0$ for every periodic point $p$ and from (\ref{e3}) follows that $r=0$. For the second inequality we can consider a cocycle $b(n,x)$ over $\si^{-1}$ generated by $a^{-1}(x)$.  Below, we will prove that  if $a(x)$ is H\"{o}lder continuous then $a^{-1}(x)$ is also H\"{o}lder continuous, and therefore we can apply Theorem \ref{t3} to the cocycle $b(n,x)$ also. But $a(-n,x)=b(n,x)$ and if $a(n,p)=e$ for every periodic point then 
$$b(n,p)=a(-n,p)=a(-n,\si^np)=[a(n,p)]^{-1}=e$$
So the rate of growth $r$ for $b(n,x)$ is also 0. 

The last inequality follows from the fact that
$$[a(n,x)]^{-1}=b(n,\si^n x)$$

The only thing left to show is that if $a(x)$ is $\alpha$-H\"{o}lder continuous then $a^{-1}(x)$ is also $\alpha$-H\"{o}lder continuous. For normed rings the operation of taking the inverse element is continuous (see \cite{Na}). Therefore the function $a^{-1}(x)$ is bounded. But 
$$\|a^{-1}-b^{-1}\|=\|b^{-1}(b-a)a^{-1}\|\le\|b^{-1}\|\cdot\|(b-a)\|\cdot\|a^{-1}\|$$
Therefore, if the function $a(x)$ is $\alpha$-H\"{o}lder continuous, then $a^{-1}(x)$ is also $\alpha$-H\"{o}lder continuous.
\end{proof}

\section{Proof of the Theorem \ref{t3}}

The following result proven in \cite[Proposition 4.2]{MK} will be used. 
\begin{lemman}[A. Karlsson, G. A. Margulis]\label{MK} Let $\si:X\to X$ be a measurable map, $\mu$ an ergodic measure, $s(n,x)$ a subadditive cocycle. For any $\ep>0$, let $E_\ep$ be the set of $x$ in $X$ for which there exist an integer $K(x)$ and infinitely many $n$ such that 
$$s(n,x)-s(n-k,\si^kx)\ge (r_\mu-\ep)k$$
for all $k, K(x)\le k\le n$. Let $E=\cap_{\ep>0} E_{\ep}$ then $\mu(E)=1$.
\end{lemman}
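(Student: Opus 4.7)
The plan is to deduce this lemma as a refinement of Kingman's subadditive ergodic theorem. Since $E = \bigcap_{m\ge 1} E_{1/m}$, it suffices to prove $\mu(E_\ep) = 1$ for every fixed $\ep > 0$. By Kingman's theorem, the set $X_0 := \{x : s(n,x)/n \to r_\mu\}$ has full $\mu$-measure and is $\si$-invariant; the goal is then to show that $X_0 \setminus E_\ep$ is $\mu$-null.

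It is convenient to pass to the auxiliary subadditive cocycle $\phi(n, x) := s(n, x) - (r_\mu - \ep) n$, which satisfies $\phi(n+m,x) \le \phi(m,x) + \phi(n,\si^m x)$ and, on $X_0$, obeys $\phi(n,x)/n \to \ep > 0$, so $\phi(n,x) \to +\infty$. The condition defining $E_\ep$ at a point $x$ then becomes: there exists $K(x)$ and infinitely many $n$ with
$$\phi(n, x) \ge \phi(n - k, \si^k x) \quad \text{for every } k \in [K(x), n].$$
So the lemma asserts, at $\mu$-a.e.\ $x$, the existence of infinitely many \emph{dominating times} $n$ at which $\phi(n, x)$ majorizes all its forward-shifted counterparts $\phi(n-k, \si^k x)$ with $k$ not too small.

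The approach would be a contradiction argument driven by a stopping-time / chaining construction. Assume $F := X_0 \setminus E_\ep$ has positive measure. Then for each $x \in F$ and each $K$, all sufficiently large $n$ admit a \emph{bad} index $k(n) \in [K, n]$ with $\phi(n - k(n), \si^{k(n)} x) > \phi(n, x)$. Fix $K$ and iterate the bad-index selection: starting from a large $n_0$, extract $k_0$, then reapply the selection to the pair $(\si^{k_0} x,\, n_0 - k_0)$ to obtain $k_1$, and so on until the remaining horizon drops below $K$. Along such a chain the $\phi$-values of successive base-horizon pairs are strictly increasing, while the cumulative length $\sum k_i \le n_0$. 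Integrating over $\mu$ and exploiting the $\si$-invariance of $X_0$ together with the $L^1$-convergence $\int s(n,x)/n\, d\mu \to r_\mu$, this pointwise monotonicity of $\phi$ along chains would convert into a strict inequality $\int \phi(n, x)\, d\mu < \ep n$ persisting on a positive-density set of $n$, contradicting $\inf_n \int \phi(n, x)/n\, d\mu = \ep$.

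The main technical obstacle is keeping all base points $\si^{k_0 + \cdots + k_j} x$ visited during the chain inside a common Kingman-generic set so that the reapplied bad-index selection is legitimate, and packaging the iterative procedure as a measurable selection so that the final inequality is actually obtained after integration. In the Karlsson-Margulis treatment this is handled by discarding further null sets and by reformulating the chaining procedure as a maximal-type inequality for $\phi$; the balancing of the parameters $K$, $N$, and the density of bad $n$ so that the integrated defect strictly exceeds $\ep$ is the delicate heart of the argument.
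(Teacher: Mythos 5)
The paper does not prove this lemma; it is quoted verbatim as Proposition~4.2 of Karlsson--Margulis \cite{MK} and used as a black box, so there is no in-paper argument for you to compare against.

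Your reduction is the right starting point: passing to $\phi(n,x)=s(n,x)-(r_\mu-\ep)n$ keeps subadditivity, gives $\phi(n,x)/n\to\ep>0$ for $\mu$-a.e.\ $x$, and rewrites the target inequality as $\phi(n,x)\ge\phi(n-k,\si^k x)$. That much is faithful to Karlsson--Margulis. The contradiction you then reach for, however, is not secured by what you describe, and the gap is not merely cosmetic. First, to extract $k_1$ you must know that $\si^{k_0}x$ again lies in the bad set $F=X_0\setminus E_\ep$ \emph{and} has a large enough remaining horizon; $F$ need not be invariant and may have small measure, so the chain can die after one step, and invoking recurrence to return to $F$ costs extra time that destroys the budget $\sum k_i\le n_0$ on which the argument rests. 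Second, even when the chain survives to a terminal horizon below $K$, it only yields $\phi(n_0,x)<C_K$ for $x$ in a (possibly small) subset of $F$; to conclude $\int\phi(n_0,\cdot)\,d\mu<\ep n_0$ you would also need a matching bound on $\int_{X\setminus F}\phi(n_0,\cdot)\,d\mu$, which the a.e.\ linear drift does not supply because the convergence $\phi(n,\cdot)/n\to\ep$ is not uniform and $X\setminus F$ may carry almost all of the mass. These are exactly the issues you defer to ``discarding further null sets and a maximal-type inequality,'' and that deferred part \emph{is} the proof: without a concrete maximal/stopping-time construction replacing the naive chain, the outline does not close. As a sketch of the strategy in \cite{MK} it is reasonable; as a proof it is incomplete in the place where the real difficulty lies.
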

If $s(n,x)=\ln\|a(n,x)\|$ then the inequality in the lemma could be rewritten as 
\begin{equation}\|a(n-k,\si^k x)\|\le \|a(n,x)\|e^{-(r_\mu-\ep)k}\label{fmk}\end{equation}
\begin{definition} Let $\gamma,\delta$ be some positive numbers and $n$ is a natural number. We say that a point $y$ is $(\gamma,\delta,n)-$close to $x$ if 
$$dist(\si^k x,\si^k y)\le \delta e^{-\gamma k} \quad \text{for all}\quad 0\le k\le n$$
\end{definition} 

\begin{prop} \label{l5} Let  $\si:X\to X$ be a  homeomorphism  and $a:X\to B^\times$ be an $\alpha$-H\"older continuous  function, and $s(n,x)=\ln\|a(n,x)\|$. For any $\gamma>0$ let $S_\gamma$ be the set of points $x$ in $X$ for which there exist a number $\delta(x)>0$ and infinitely many $n$ such that for any point $y$ which is $(\gamma,\delta,n)-$close to $x$ 
\begin{equation}\label{f11}
\|a(n,y)\|\ge \frac12\|a(n,x)\|
\end{equation}
Then $\mu(S_\gamma)=1$ for any ergodic invariant measure $\mu$ with $r-r_\mu<\alpha\gamma$.
 \end{prop}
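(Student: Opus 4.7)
\emph{Proof proposal.} The strategy is to combine the Karlsson--Margulis lemma, which supplies ``good'' integers $n_j\to\infty$ along which $\|a(n_j-k,\sigma^k x)\|$ is tightly controlled by $\|a(n_j,x)\|$, with a telescoping identity for $a(n,y)-a(n,x)$. The hypothesis $r-r_\mu<\alpha\gamma$ is what will make the resulting series sum geometrically.

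First I would write the telescoping identity
\begin{equation*}
a(n,y)-a(n,x) \;=\; \sum_{k=0}^{n-1} a(n-k-1,\sigma^{k+1}x)\,\bigl[a(\sigma^k y)-a(\sigma^k x)\bigr]\,a(k,y),
\end{equation*}
obtained by interpolating, one factor at a time, between $a(\sigma^{n-1}y)\cdots a(y)$ and $a(\sigma^{n-1}x)\cdots a(x)$. The aim is then to estimate the right-hand side in norm by $\tfrac12\|a(n,x)\|$ along a suitable sequence of $n$'s, which forces $\|a(n,y)\|\ge\tfrac12\|a(n,x)\|$.

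Next, I would fix a small auxiliary $\epsilon>0$ (to be pinned down), take $x$ in the full-measure set $E$ from the Karlsson--Margulis lemma, and let $K(x)$ and the sequence $n_j\to\infty$ be furnished by that lemma. Each factor of the $k$-th summand is controlled separately. The middle factor satisfies $\|a(\sigma^k y)-a(\sigma^k x)\|\le C_a\delta^\alpha e^{-\alpha\gamma k}$ by $\alpha$-H\"older continuity of $a$ and $(\gamma,\delta,n)$-closeness. The right factor satisfies the universal bound $\|a(k,y)\|\le e^{s_k}\le C_1 e^{(r+\epsilon')k}$ coming from the definition of $r$ in (\ref{e2}). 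For the left factor I would split into two ranges: when $k+1\ge K(x)$, the Karlsson--Margulis inequality (\ref{fmk}) applied with $k$ replaced by $k+1$ gives $\|a(n_j-k-1,\sigma^{k+1}x)\|\le\|a(n_j,x)\|e^{-(r_\mu-\epsilon)(k+1)}$; when $k+1<K(x)$, I would instead use the cocycle identity $a(n_j-k-1,\sigma^{k+1}x)=a(n_j,x)\,[a(k+1,x)]^{-1}$, combined with the fact that the finite collection $\{[a(j,x)]^{-1}\}_{1\le j<K(x)}$ is bounded by some $M(x)$ thanks to continuity of inversion in a Banach ring.

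Inserting the three bounds into the telescope, the $k$-th term factors out as $\|a(n_j,x)\|\delta^\alpha$ multiplied by an exponential $e^{-\eta k}$ with exponent $\eta:=\alpha\gamma-(r-r_\mu)-\epsilon-\epsilon'$. The hypothesis $r-r_\mu<\alpha\gamma$ permits choosing $\epsilon,\epsilon'$ so that $\eta>0$. The tail $k\ge K(x)-1$ then sums as a geometric series, producing $\|a(n_j,x)\|A(x)\delta^\alpha$, while the finitely many low-$k$ terms contribute a further $\|a(n_j,x)\|B(x)\delta^\alpha$. Setting $\delta=\delta(x)$ so small that $(A(x)+B(x))\delta^\alpha\le 1/2$ yields $\|a(n_j,y)-a(n_j,x)\|\le\tfrac12\|a(n_j,x)\|$, hence $\|a(n_j,y)\|\ge\tfrac12\|a(n_j,x)\|$ for the infinitely many $n_j$. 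Since this argument applies to $\mu$-a.e.\ $x\in E$, we conclude $\mu(S_\gamma)=1$.

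The main obstacle I expect is the low-$k$ part of the telescope, where Karlsson--Margulis gives no information. This is the step where the invertibility of $a$ and the continuity of inversion in the Banach ring $B$ must enter, in order to keep the left factor comparable to $\|a(n_j,x)\|$ rather than just to $e^{(r+\epsilon')n_j}$ (which could be vastly larger when $\|a(n_j,x)\|$ is small). The price is that $\delta(x)$ absorbs a finite, $x$-dependent constant, but this is harmless since the statement permits $\delta$ to depend on $x$.
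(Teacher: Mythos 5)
Your proposal is correct and follows essentially the same route as the paper: the telescoping identity for $a(n,y)-a(n,x)$, Karlsson--Margulis for the left factor on the range $k\ge K(x)$, $\alpha$-H\"older continuity of $a$ together with $(\gamma,\delta,n)$-closeness for the middle factor, the growth rate $r$ for the right factor, a separate finite bound on the low-$k$ terms via the cocycle identity and boundedness of the inverses, and finally shrinking $\delta(x)$. The only cosmetic differences are that the paper bounds the low-$k$ inverse factor by $m^{k}$ with $m=1+\max_x\|a^{-1}(x)\|$ (relying on the uniform bound on $\|a^{-1}\|$ established earlier) rather than a finite-collection constant $M(x)$, and that it fixes a single $\ee<\tfrac13(\gamma\alpha-(r-r_\mu))$ instead of your two parameters $\epsilon,\epsilon'$; neither changes the substance.
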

 \begin{proof}
Let $\mu$ be an ergodic invariant measure with $r-r_\mu<\alpha\gamma$. We choose a number $0<\ee<\frac13(\gamma\alpha-(r-r_\mu))$.  Almost all points with respect to this measure and \ee\  satisfy Karlsson-Margulis Lemma  and for almost all points the number $r(x)=r_\mu$. Take a point $x$ from the intersection of those two sets. Using the identity 
$$b_na_{n-1}\ldots b_1-a_na_{n-1}\ldots a_1=\sum_{k=1}^n b_n\ldots b_{k+1}(b_{k}-a_{k})a_{k-1}\ldots a_1$$
we can see that
$$\|a(n,x)-a(n,y)\|=\|\sum_{k=0}^{n-1} a(n-k-1,\si^{k+1} x)[a(\si^{k}x)-a(\si^{k}y)]a(k,y)\|\le$$
\begin{equation}\label{f4} 
\sum_{k=0}^{n-1} \|a(n-k-1,\si^{k+1} x)\|\cdot\|a(\si^{k}x)-a(\si^{k}y)\|\cdot \|a(k,y)\|
\end{equation}
Our goal is to show that if we choose  a sufficiently small  $\delta$ then for infinitely many numbers $n$  and for every   $(\gamma,\delta,n)-$close $y$ the sum $(\ref{f4})$ is smaller than $ \frac12\|a(n,x)\|$.
 Let $K(x,\ee)$ and $n$ be as in the Karlsson-Margulis Lemma and a point $y$ is $(\gamma,\delta,n)-$close to $x$ for some $\delta$ that we specify later. By definition $\displaystyle{r=\lim_{k\to\infty} s_k/k}$, so we  can find $K\ge K(x,\ee)$ such that $s_{k}<k(r+\ee)$ for all $k\ge K$, or $\|a(k,x)\|<e^{k(r+\ee)}$.  For every $k> K$ factors in the product 
\begin{equation}\label{f3} \|a(n-k-1,\si^{k+1} x)\|\cdot\|a(\si^{k}x)-a(\si^{k}y)\|\cdot \|a(k,y)\|
\end{equation} could be bounded from above as \\
$\|a(n-k-1,\si^k x)\|\le  \|a(n,x)\|e^{- (r_\mu-\epsilon)(k+1)}\quad$  It follows from the Karlsson-Margulis Lemma. \\
$\|a(\si^{k}x)-a(\si^{k}y)\|\le H\delta^\alpha e^{-k\gamma\alpha}$  where $H$ is some positive constant. It follows from the fact that $a(x)$ is H\"older continuous and $y$ is $(\gamma,\delta,n)$-close to $x$.\\
$ \|a(k,y)\|\le e^{s_{k}}\le e^{k(r+\ee)}$    It follows from the definition of $K$.\\
If we combine those inequalities we can see that that the number in the product  (\ref{f3}) is smaller than
$$ \|a(n,x)\|e^{- (r_\mu-\epsilon)(k+1)}\cdot H\delta^\alpha e^{-k\gamma\alpha}\cdot e^{k(r+\ee)}\le  \|a(n,x)\|H\delta^\alpha e^{-k(\gamma\alpha-(r-r_\mu)-2\ee)}$$  

After simplification we can write that
$$ \|a(n-k-1,\si^{k+1} x)\|\cdot\|a(\si^{k}x)-a(\si^{k}y)\|\cdot \|a(k,y)\|\le \|a(n,x)\|H\delta^\alpha e^{-k\ee}
$$

If we add  those inequalities for $k\ge K$ we can see that
$$\|\sum_{k=K}^n a(n-k+1,\si^{k+1} x)[a(\si^{k}x)-a(\si^{k}y)]a(k,y)\|\le$$
$$ \|a(n,x)\|H\delta^\alpha \sum_{k=0}^\infty e^{-k\ee}= \|a(n,x)\|\frac{H\delta^\alpha}{1-e^{-\ee}}$$
To estimate the number in the formula (\ref{f3}) for $k<K$ we denote as 
$$M=1+\max_x ||a(x)||$$ 
$$m=1+\max_x ||a^{-1}(x)||$$
 Then as before 
 $$\|a(\si^{k}x)-a(\si^{k}y)\|\le H \delta^\alpha e^{-k\gamma\alpha}$$
 but 
$$||a(n-k+1,\si^{k+1} x)||=||a(-k+1,\si^{n}x)a(n,x)||\le ||a(n,x)||\cdot m^k$$
and
$$||a(k,y)||\le M^{k}$$
So for  $k<K$ the expression (\ref{f3}) is bounded by 
$$||a(n,x)|| m^k\cdot H\delta^\alpha e^{-k\gamma\alpha}\cdot M^{k}\le ||a(n,x)||H\delta^\alpha (mM)^K$$
Finally, 
$$||a(n,x)-a(n,y)||\le ||a(n,x)||\delta^\alpha \left(\frac{H}{1-e^{-\ee}}+K(mM)^K\right)=||a(n,x)||\delta'$$
By choosing $\delta$ sufficiently small we can make $\delta'<1/2$. Then 
$$||a(n,y)||= ||a(n,x)-(a(n,x)-a(n,y))||\ge ||a(n,x)||-||a(n,x)-a(n,y)||\ge$$
$$\ge \frac12|| a(n,x)||$$
\end{proof}

To finish the proof of the Theorem \ref{t3} we  will need the following  features of the maps with closing property.
\begin{lemma}\label{l6} Let $\si:X\to X$ be a homeomorphism with closing property and the expansion constant $\la$, then for any positive numbers $\ee$ and $\delta$   there is a number $\delta'$ such that if $dist(x,\si^kx)\le\delta'$ and $k\ge n(1+\ee)$ then there is a point $p$ such that $\si^k p=p$ and $p$ is $(\gamma,\delta,n)-$close to $x$, where $\gamma=\ee\lambda$.
 \end{lemma}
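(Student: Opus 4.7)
The plan is to apply the closing property of \si\ with the time parameter taken equal to $k$ itself. If $\delta' \le \delta_0$ and $\dist(x, \si^k x) \le \delta'$, the closing property supplies a periodic point $p$ with $\si^k p = p$ together with an auxiliary shadowing point $z$ satisfying, for every $0 \le i \le k$,
$$\dist(\si^i p, \si^i z) \le C\delta' e^{-i\la}, \qquad \dist(\si^i x, \si^i z) \le C\delta' e^{-(k-i)\la}.$$
Applying the triangle inequality through $z$ then yields
$$\dist(\si^i x, \si^i p) \le C\delta'\bigl(e^{-i\la} + e^{-(k-i)\la}\bigr).$$

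The key arithmetic step is to check that both exponentials on the right are dominated by $e^{-\gamma i}$ with $\gamma = \ee\la$ throughout the window $0 \le i \le n$. The hypothesis $k \ge n(1+\ee)$ combined with $i \le n$ gives $k - i \ge k - n \ge n\ee \ge i\ee$, hence $(k-i)\la \ge \gamma i$ and $e^{-(k-i)\la} \le e^{-\gamma i}$. For the other term, since $\gamma = \ee\la \le \la$ (we may freely restrict to $\ee \le 1$, else replace $\gamma$ by $\min(1,\ee)\la$), we also have $e^{-i\la} \le e^{-\gamma i}$. Combining these estimates,
$$\dist(\si^i x, \si^i p) \le 2C\delta' e^{-\gamma i}.$$

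Setting $\delta' := \min\bigl(\delta_0,\, \delta/(2C)\bigr)$ converts this into the required bound $\dist(\si^i x, \si^i p) \le \delta e^{-\gamma i}$, so that $p$ is $(\gamma, \delta, n)$-close to $x$. There is no real obstacle in the argument: the lemma is essentially a bookkeeping consequence of the definition of the closing property, and the only point of care is the verification $k - i \ge i\ee$ on the window $0 \le i \le n$, which is precisely where the hypothesis $k \ge n(1+\ee)$ is used. The sharpness of the rate $\gamma = \ee\la$ is dictated by the value $i = n$, at which both shadowing exponentials collapse to $e^{-n\ee\la}$.
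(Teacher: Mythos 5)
Your proof is correct and follows essentially the same route as the paper: apply the closing property at time $k$, pass through the shadowing point $z$ via the triangle inequality, and then show that both shadowing rates are dominated by $e^{-\gamma i}$ on the window $0\le i\le n$ using $k\ge n(1+\ee)$. The paper packages the final estimate as a convexity observation (the chord from $(0,0)$ to $(n,-\la\min(n,k-n))$ lies above the convex graph of $-\la\min(x,k-x)$), whereas you check the two exponential inequalities directly; this is merely a presentational difference. Your explicit caveat that $\ee\le 1$ is needed for the term $e^{-i\la}\le e^{-\ee\la i}$ is a real (if minor) point: the paper's formula $\gamma=\frac{k-n}{n}\la$ also tacitly assumes $k-n\le n$, and in the application $\ee$ is small so it is harmless, but you are right to flag it. Your inclusion of $\delta_0$ in the definition $\delta'=\min(\delta_0,\delta/(2C))$ is also slightly more careful than the paper's $\delta'=\delta/(2C)$.
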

\begin{proof} It follows from the definition of the closing property that for $0\le i\le k$
 $$dist(\si^i x,\si^i p)\le dist(\si^i x,\si^i z)+dist(\si^i z,\si^i p)\le 2C\delta' e^{-\lambda \min(i,k-i) }$$ 
 
 The function $-\lambda\min(x,k-x)$ is convex downward so the segment connecting points $(0,0)$ and $(n,-\lambda\min(n,k-n))$ on the graph of this function stays above the graph. The linear function that corresponds to this segment is $-\gamma x$ where 
 $$\gamma=\frac{k-n}{n}\lambda>\ee\lambda$$ 
 Therefore the point $p$ satisfies the following inequalities:
 $$dist(\si^i x,\si^i p)\le 2 C\delta' e^{-\gamma i } \quad 0\le i\le n$$ 
 If we take $\delta'=\frac{\delta}{2 C}$ we can see that $p$ is  $(\gamma,\delta,n)-$close to $x$.
 \end{proof}
\begin{lemma}\label{l7} Let $\si:X\to X$ be a homeomorphism.  For any $\ee,\delta>0$ let  $P_{\ep,\delta}$ be the set of  points $x$ in $X$ for which there is an integer number $N=N(x,\ep,\delta)$ such that if $n>N$ then there is an integer $ n(1+\ee)<k<n(1+2\ee)$  for which
$$dist(x,\si^k x)<\delta$$
If $\displaystyle{P=\cap_{\ee>0,\delta>0} P_{\ee,\delta}}$ then $\mu(P)=1$ for any invariant  measure $\mu$. 
\end{lemma}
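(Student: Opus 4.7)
The plan is to prove $\mu(P_{\ee,\delta})=1$ for each fixed $\ee,\delta>0$ by applying Birkhoff's ergodic theorem to the indicator functions of a finite cover of $X$ by small balls, and then to recover $\mu(P)=1$ by expressing $P$ as a countable intersection.

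First I would invoke the ergodic decomposition to reduce to the case where $\mu$ is ergodic. Fix $\ee,\delta>0$, cover $X$ by finitely many open balls $B_1,\ldots,B_M$ of radius $\delta/2$, and discard those of $\mu$-measure zero. By Birkhoff applied simultaneously to the finitely many indicators $\mathbf{1}_{B_j}$, for $\mu$-a.e.\ $x$ one has $N^{-1}|\{k<N:\si^k x\in B_j\}|\to\mu(B_j)$ for each $j$. Such an $x$ necessarily belongs to some $B_j$ with $\mu(B_j)>0$, and subtracting the empirical counts at $N=n(1+2\ee)$ and $N=n(1+\ee)$ yields
$$\bigl|\{k:n(1+\ee)<k<n(1+2\ee),\ \si^k x\in B_j\}\bigr|=\mu(B_j)\,\ee\,n+o(n).$$
Since this tends to infinity with $n$, for all sufficiently large $n$ at least one such $k$ exists; as $x$ and $\si^k x$ both lie in a ball of radius $\delta/2$, we get $\dist(x,\si^k x)<\delta$, proving $x\in P_{\ee,\delta}$.

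To upgrade this to $\mu(P)=1$, I would verify the monotonicity $P_{\ee',\delta'}\subseteq P_{\ee,\delta}$ whenever $\ee'<\ee$ and $\delta'\le\delta$. Given large $n$, the interval $[n(1+\ee)/(1+\ee'),\,n(1+2\ee)/(1+2\ee')]$ has length of order $n$ and therefore contains an integer $n'$, and the return time $k\in(n'(1+\ee'),n'(1+2\ee'))$ provided by $P_{\ee',\delta'}$ then automatically satisfies $n(1+\ee)<k<n(1+2\ee)$ together with $\dist(x,\si^k x)<\delta'\le\delta$. Consequently $P=\bigcap_{k\ge 1}P_{1/k,1/k}$ is a countable intersection of full-measure sets, so $\mu(P)=1$. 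The one step where care is needed is the simultaneous application of Birkhoff to the finitely many indicators $\mathbf{1}_{B_j}$, so that the Birkhoff limit actually controls the count of returns to the specific ball in which $x$ sits; the monotonicity check is just a routine computation.
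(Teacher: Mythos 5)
Your proof is correct and uses essentially the same mechanism as the paper's: apply Birkhoff to indicator functions of small balls, observe that the empirical count of visits to a positive-measure ball containing $x$ grows linearly, and conclude that some visit must fall in the window $(n(1+\ee),n(1+2\ee))$. The paper organizes the bookkeeping a little differently — it works with a countable base of the topology together with the fact that the support of $\mu$ has full measure, and shows directly that every Birkhoff-generic point of the support lies in $P$, rather than first proving $\mu(P_{\ee,\delta})=1$ for each $(\ee,\delta)$ and then reducing to a countable intersection via your monotonicity observation — but the core argument is identical.
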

\begin{proof} It is enough to prove it only for ergodic invariant measures. Let $\mu$ be an invariant ergodic measure. The support of a measure is the the set of all points in $X$ such that the measure of any open ball centered at $x$ is not 0. The support of a measure on a compact metric space  has always full measure {(see \cite{Fe})}. $X$ is compact so there is a sequence of balls $B_i$ which is  a base of the topology.  If we define as $f_i(n,x)$ the number  of such $k$  that $\si^k x\in B_i$ and $1\le k\le n$ then by Birkhoff's Ergodic Theorem $\displaystyle{\lim_{n\to\infty} \frac{f_i(n,x)}{n}}$ exists and equals $\mu(B_i)$ for almost all $x$. It is easy to see that any $x$ that belongs to the support of the measure and  satisfies  Birkhoff's Ergodic Theorem for all $i$ will belong to the set $P$. Indeed, if we choose $\delta>0$ then we know that the ball $B_\delta$ centered at $x$ should have measure greater than 0. This ball is a countable union of some of the balls $B_i$, therefore there exists at least one ball $B_{i_0}$ such that $\mu(B_{i_0})>0$ and $B_{i_0}\subset B_\delta$. Now, using the numbers $\ee$ and $\mu(B_{i_0})$ we choose a very small $\ep$. How small we specify later.  For this $\ep>0$ we can find $N$ such that if $n>N$ then $|f_{i_0}(n,x)-\mu(B_{i_0}n)|<\ep n$. If $n>N$ and there is no $k$ such that ${n(1+\ee)<k<n(a+2\ee)}$ and ${\si^k x\in B_{i_0}}$ then $f_{i_0}(n(1+\ee),x)=f_{i_0}(n(1+2\ee),x)$. It is impossible if we choose $\ep$ very small because in this case
$$ (\mu(B_{i_0})+\ep)n(1+\ee)\ge f_{i_0}(n(1+\ee,x)=f_{i_0}(n(1+2\ee,x)\ge (\mu(B_{i_0})-\ep)n(1+2\ee)$$
or
$$\frac{\mu(B_{i_0})+\ep}{\mu(B_{i_0})-\ep}\ge \frac{1+2\ee}{1+\ee}$$
When $\ep$ is small the left side is as close to 1 as we want, so we get a contradiction. It means, if $N$ is sufficiently big and $n>N$ then there is $k$ such that ${\si^k\in B_{i_0}\subset B_\delta }$ and ${n(1+\ee)\le k\le n(1+2\ee)}$. Therefore the set $P$ includes the intersection of two sets of full measure and  has the full measure.
\end{proof}
 {\it Proof of the Theorem \ref{t3}:}  Choose any $\ee>0$. We can find an ergodic invariant measure $\mu$ such that ${r-r_\mu<min(\ee,\ee\alpha\lambda)}$. Choose a point $x$ such that $r(x)=r_\mu$ and $x$ belongs to the set $S_{\ee\lambda}\cap P$ where $S_{\ee\lambda}$ as in the Proposition \ref{l5} and $P$ as in the Lemma \ref{l7}. All those sets have full support, so their intersection is not empty. For the point $x$ we can find $\delta$ such that for infinitely many $n_i$ if a point $p$ is $(\ee\lambda,\delta,n_i)-$close to $x$ then
 \begin{equation}\label{f10}\|a(n_i,p)\|\ge\frac12\|a(n_i,x)\|\end{equation}
For this $\delta$ we can find $\delta'$ from Lemma \ref{l6}. Using this  $\delta'$ and $\ee$ we can find $N=N(\ee,\delta')$ from the Lemma \ref{l7} such that if $n_i>N$ then there is $k$ such that $n_i(1+\ee)\le k\le n_i(1+2\ee)$ and $dist(\si^kx,x)<\delta'$, then from Lemma \ref{l6} follows that there is a periodic point $p$ with the period $k$ such that it is $(\ee\lambda,\delta,n_i)-$close to $x$ and therefore satisfies the {inequality (\ref{f10})}.

Now, we estimate $\|a(k,p)\|$. Let $N'$ be a number such that if $n>N'$ then 
$$\|a(n,x)\|\ge e^{n(r_\mu-\ee)}\ge e^{n(r-2\ee)}$$
We always can choose $n_i$ bigger not only than $N$ but also and $N'$. Denote as ${m=\ln\max_y\|a^{-1}(y)\|}$. Then
$$\|a(n_i,p)\|=\|a(-(k-n_i),p)a(k,p)\|\le \|a(-(k-n_i),p)\|\cdot\|a(k,p)\|$$
 so
 $$\|a(k,p)\|\ge \frac{\|a(n_i,p)\|}{e^{m(k-n_i)}}\ge\frac12\frac{\|a(n_i,x)\|}{e^{2m\ee n_i}}\ge \frac12 e^{(r-2\ee-2m\ee)n_i}$$
 We see that
 $$r_p=\frac{\ln\|a(k,p)\|}{k}\ge\frac{(r-2\ee-2m\ee)n_i-\ln 2}{(1+2\ee)n_i}$$
 Number $m$ does not depend on the choice of $x,n_i$ and $p$, so by choosing $\ee$ very small and $n_i$ very big we can make $r_p$ as close to $r$ as we want.

 \qed\\

\section{Proof of the Main Theorem }

After Theorem \ref{t3} is established we can use Corollary \ref{c3} to show that  the growth of $\|a(n,x)\|$ is sub-exponential.  It allows to use the idea of the original Liv\v{s}ic proof for cocycles with values in Banach rings. 
 H.Bercovici and V.Nitica in \cite{BN} (Theorem 3.2) showed that if $\si$ is a transitive Anosov map, periodic obstructions vanish and  
 \begin{equation}\label{f11}\begin{split}\|a(x)\|&\le 1+\delta\\
 \|a^{-1}(x)\|&\le 1+\delta
 \end{split}\end{equation}
 for some $\delta$ that depends on $\si$, then $a(x)$ is a coboundary.  From Corollary \ref{c3} we can get a little bit less. If periodic obstructions vanish then for any $\delta>0$ there exists $C>0$ such that for any positive integer $n$
  \begin{equation*}\begin{split}\|a(n,x)\|&\le C(1+\delta)^n\\
 \|[a(n,x)]^{-1}\|&\le C(1+\delta)^n
 \end{split}\end{equation*}
 Those inequalities are  actually enough to repeat the arguments from \cite{BN} with some small changes, but we also refer to more general theorem proven in \cite{G} that considers cocycles over maps that satisfy closing property and with values in  abstract groups satisfying some conditions. But we will need couple of more definitions.
\begin{definition} If $G$ is a group with metric denoted as $dist$ and $g\in G$ we define the distortion of the element $g$ as 
$$|g|=\sup_{f\neq g}\left[ \frac{dist(gf,gh)}{dist{(f,h)}},\frac{dist(fg,hg)}{dist{(f,h)}},\frac{dist(g^{-1}f,g^{-1}h)}{dist{(f,h)}},\frac{dist(fg^{-1},hg^{-1})}{dist{(f,h)}}\right]$$
We say that a group is Lipschitz if $|g|<\infty$ for all $g\in G$.
\end{definition}
It is easy to see that for Banach rings if we define $$dist(f,h)=\max(\|f-h\|,\|f^{-1}-g^{-1}\|)$$ then  $|g|\le \max(\|g\|,\|g^{-1}\|)$ and $B^\times$ is Lipschitz.
\begin{definition} We call the rate of distortion of a cocycle $a(x):X\to G$ the following number
$$r(a)=\lim_{n\to\infty} \frac{\sup_{x\in X}\ln |a(n,x)|}{n}$$
\end{definition}
\begin{theorem}\label{t9}  Let $G$ be a Lipschitz group with the property that there are numbers $\ep$ and $D$ such that $dist(g,e)\le\ep$ implies $|g|\le D$, \si\ be a transitive homeomorphism with $\lambda$-closing property. If the rate of the distortion of an $\alpha$-H\"older continuous cocycle $a:X\to G$ is smaller than $\alpha\la/2$ and the periodic obstructions vanish then $a(x)$ is a coboundary with $\alpha$-H\"older continuous transition function $t(x)$.
\end{theorem}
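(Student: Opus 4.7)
The plan is to carry out the classical Liv\v{s}ic construction: choose a point $x_0$ whose \si-orbit is dense (available by transitivity), define $t$ on this orbit by $t(\si^n x_0) = a(n, x_0)$, prove a uniform $\alpha$-H\"older estimate on the orbit, and extend to all of $X$. The cocycle relation $a(x) = t(\si x) t(x)^{-1}$ holds tautologically on the orbit, so once $t$ is continuously extended the identity propagates to $X$.

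The H\"older estimate amounts to the following claim: there exist constants $C, \delta_1 > 0$ such that whenever $\dist(\si^n x_0, \si^m x_0) \le \delta_1$ with $n < m$, one has $\dist(a(n, x_0), a(m, x_0)) \le C \dist(\si^n x_0, \si^m x_0)^\alpha$. Setting $y = \si^n x_0$, $k = m - n$, and $\delta = \dist(y, \si^k y)$, and working with a right-invariant distance on $G$ to eliminate the spurious factor $|a(n, x_0)|$, this reduces to showing $\dist(a(k, y), e) \le C \delta^\alpha$. The closing property produces a periodic point $p$ with $\si^k p = p$ whose orbit shadows $y$ exponentially toward the midpoint, and the vanishing of periodic obstructions gives $a(k, p) = e$, so the task becomes estimating $\dist(a(k, y), a(k, p))$.

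This is done by telescoping the sequence $\eta_j = a(j, y) a(j, p)^{-1}$, which satisfies $\eta_0 = e$, $\eta_k = a(k, y)$, and $\eta_{j+1} = a(\si^j y) \eta_j a(\si^j p)^{-1}$. Two rates compete: the exponential contraction rate $\alpha\la$ in the H\"older estimate $\dist(a(\si^j y), a(\si^j p)) \le C_H \dist(\si^j y, \si^j p)^\alpha$ away from the endpoints $j \in \{0, k\}$ (from the shadowing bound $\dist(\si^j y, \si^j p) \le C'\delta(e^{-j\la} + e^{-(k-j)\la})$), and the distortion growth rate $r(a) + \ee$ introduced by the Lipschitz factors $|a(j, \cdot)|$ and $|a(k-j, \cdot)|$ that arise when one converts $\dist(\eta_{j+1}, \eta_j)$ into differences of $a$. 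By splitting the telescoped sum at the midpoint $j = k/2$, so that only half of the orbit's distortion budget is spent on each side, and invoking the near-identity hypothesis (that $\dist(g, e) \le \ee$ implies $|g| \le D$) to verify inductively that each $\eta_j$ remains in the bounded-distortion regime, the two resulting geometric sums combine to a uniform bound of the form $C \delta^\alpha$ precisely under the condition $r(a) < \alpha\la/2$.

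Once this H\"older estimate is established, $t$ extends uniquely to an $\alpha$-H\"older function on $X$ by completeness of $G$, and the coboundary identity extends by continuity of multiplication and inversion. The main obstacle is the telescoping estimate: arranging the midpoint split so that distortion factors only accrue on the shorter half, and controlling the accumulated distortion of the intermediate $\eta_j$ via the near-identity hypothesis, is the technical heart of the argument, and is what forces the factor $1/2$ in the bound on $r(a)$. In the setting of the Main Theorem, Corollary \ref{c3} shows that periodic obstructions vanishing already yields $r(a) = 0$, so the hypothesis of Theorem \ref{t9} is satisfied a fortiori. This is essentially the Bercovici--Nitica argument \cite{BN} executed under the weaker near-identity regime afforded by Corollary \ref{c3}, and may alternatively be quoted as a special case of the general theorem in \cite{G}.
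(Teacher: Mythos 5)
The paper does not prove Theorem~\ref{t9} at all: its entire ``proof'' is the citation ``See \cite{G}.'' So you are not filling in a known argument but supplying one that the paper deliberately outsources, and the only fair comparison is with the general Liv\v{s}ic/Bercovici--Nitica strategy you name.

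Your high-level roadmap is the right one for that literature: fix a dense orbit, set $t(\si^n x_0)=a(n,x_0)$, reduce to a recurrence estimate via the closing property, use vanishing of periodic obstructions to compare with $a(k,p)=e$, telescope, and split the sum at the midpoint so that the shadowing decay $e^{-\alpha\la\min(j,k-j)}$ dominates the distortion growth, which is exactly what produces the threshold $\alpha\la/2$. That diagnosis of where the $1/2$ comes from is correct and worth stating.

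There is, however, a genuine gap at the very first reduction. You say that ``working with a right-invariant distance on $G$'' eliminates the factor $|a(n,x_0)|$ in $\dist\bigl(a(n,x_0),\,a(k,y)\,a(n,x_0)\bigr)$. A Lipschitz group in the sense of this paper is \emph{not} equipped with a right-invariant metric; the whole point of the distortion $|g|$ and the hypothesis $\dist(g,e)\le\ep\Rightarrow|g|\le D$ is to quantify how far multiplication is from being an isometry. For the concrete metric used on $B^\times$, namely $\dist(f,h)=\max(\|f-h\|,\|f^{-1}-h^{-1}\|)$, right translation is not isometric, and you cannot pass to an invariant metric without losing the H\"older regularity you are trying to establish. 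Since $x_0$ is fixed and $n$ is unbounded, the factor $|a(n,x_0)|$ grows like $e^{(r(a)+\ee)n}$ and does not disappear; controlling it (for instance by first proving mere continuity of $t$, deducing a uniform bound on $|t(x)|$, and only then bootstrapping to the $\alpha$-H\"older estimate, or by a two-sided telescoping anchored at both ends of the recurrence segment) is precisely the nontrivial content that \cite{BN} and \cite{G} supply. Relatedly, the recursion $\eta_{j+1}=a(\si^j y)\,\eta_j\,a(\si^j p)^{-1}$ involves a conjugation, so bounding $\dist(\eta_{j+1},\eta_j)$ costs two distortion factors (one from each side), and a one-sided telescoping accumulates $\prod_j |a(\si^j\cdot)|$ which is already exponential in $k$; the midpoint split has to be built into the telescoping itself, not applied afterward. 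As written, the sketch would not compile into a proof without repairing these two points, though the skeleton is the correct one.
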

\begin{proof} See \cite{G}\end{proof}
{\it Proof of the Main Theorem.} If $a(n,p)=e$ for every periodic point then  it follows from the Corollary \ref{c3} that the distortion rate of the $a(n,x)$ is less or equal than 0. In the  group $B^\times$ if $dist(e,g)<\frac12$ then $\|e-g\|<\frac12$ and $\|e-g^{-1}\|<\frac12$ so  $\|g\|,\|g^{-1}\|\le\frac32$. We see that $|g|<\frac32$, therefore by Theorem \ref{t9} the cocycle $a(n,x)$ is a coboundary with 
$\alpha$-H\"older continuous transition function.\qed

%

\end{document}